\setlist{listparindent=0pt,parsep=3pt}
\newcommand{\TitleWithUrl}[1]{\IfEmptyBibField{doi}%
  {\IfEmptyBibField{url}{\textit{#1}}%
    {\IfEmptyBibField{eprint}{\href {\BibField{url}}{\textit{#1}}}{\textit{#1}}}%
    }%
  {\href {https://doi.org/\BibField{doi}}{\textit{#1}}}}
\renewcommand{\eprint}[1]{\IfEmptyBibField{url}{\url{#1}}%
  {\href {\BibField{url}}{#1}}}
\newtheorem{theorem}{Theorem}[section]
\newtheorem{lemma}[theorem]{Lemma}
\newtheorem{proposition}[theorem]{Proposition}
\theoremstyle{definition}
\newtheorem{definition}[theorem]{Definition}
\theoremstyle{remark}
\newtheorem{remark}[theorem]{Remark}
\numberwithin{equation}{section}
\newcommand{\cratio}{\mathrm{cr}}
\title{Infinitesimal Darboux transformation and semi-discrete mKdV equation}
\author{Joseph Cho}
\address[Joseph Cho]{Institute of Discrete Mathematics and Geometry, TU Wien, Wiedner Hauptstrasse 8-10/104, 1040 Wien, Austria}
\email{jcho@geometrie.tuwien.ac.at}
\author{Wayne Rossman}
\address[Wayne Rossman]{Department of Mathematics, Graduate School of Science, Kobe University, 1-1 Rokkodai-cho, Nada-ku, Kobe 657-8501, Japan}
\email{wayne@math.kobe-u.ac.jp}
\author{Tomoya Seno}
\address[Tomoya Seno]{Department of Mathematics, Graduate School of Science, Kobe University, 1-1 Rokkodai-cho, Nada-ku, Kobe 657-8501, Japan}
\email{tseno@math.kobe-u.ac.jp}
\date{}
\subjclass[2020]{Primary 53A70; Secondary 35Q53, 53A04}
\begin{document}

\begin{abstract}
	We connect certain continuous motions of discrete planar curves resulting in semi-discrete potential mKdV equation with Darboux transformations of smooth planar curves.
	In doing so, we define infinitesimal Darboux transformations that include the aforementioned motions, and also give an alternate geometric interpretation for establishing the semi-discrete potential mKdV equation.
\end{abstract}

\maketitle

\section{Introduction}
We aim to establish an equivalence between two different approaches to discretizations, of distinctly different objects, with integrability.
At the heart of discrete differential geometry preserving integrability lies the field of integrable systems, a field that historically stems from the surface theory of differential geometry: Classical geometers sought methods to obtain a new surface from a given surface keeping certain properties, oftentimes represented by a particular non-linear partial differential equation (PDE), giving birth to transformation theory.

In such pursuit, one generally needs to solve a system of PDEs; however, there is a general principle which allows one to obtain new surfaces algebraically, known as the permutability or the superposition principle.
Discrete differential geometry seeks to recover the fully discrete theory and the corresponding nonlinear difference-difference equations with such integrability intact: often this is achieved by focusing on either
	\begin{enumerate}
		\item the compatibility condition of the moving frames, or
		\item the permutability of the associated transformations.
	\end{enumerate}
The semi-discrete theory, concerned with differential-difference equations described by a discrete parameter and a smooth parameter, can be recovered analogously, where now the transformation itself describes the semi-discrete structure.
Starting with the work \cite{levi_backlund_1980}, some of the earlier works that follow such approach include \cites{konopelchenko_elementary_1982, quispel_linear_1984, nijhoff_backlund_1984, nimmo_superposition_1997, doliwa_transformations_2000, schief_isothermic_2001, bobenko_discrete_2008, levi_nonlinear_1981}.
Since both approaches are aimed at preserving integrable structures, it is natural to expect a fair degree of equivalence between the two approaches, resulting in the same discretization.

A prototypical example of such equivalence in surface theory can be found in the class of $K$-surfaces, those surfaces with constant (negative) Gaussian curvature.
In studying this class of surfaces, Bour \cite{bour_theorie_1862} identified the now celebrated sine-Gordon equation via the compatibility condition.
It was then the work of Bäcklund \cite{backlund_om_1883} which used the concept of tangential line congruences to obtain new $K$-surfaces from a given one, a process commonly referred to as the Bäcklund transformation.
Finally, Bianchi \cite{bianchi_sulla_1892} showed that there exists a superposition principle for the Bäcklund transformation, commonly referred to as Bianchi permutability.
With these properties of $K$-surfaces, discrete $K$-surfaces were first studied in \cites{sauer_parallelogrammgitter_1950, wunderlich_zur_1951} by considering the Bianchi quadrilateral of the Bäcklund transformations of a smooth $K$-surface; on a separate note, the work \cite{hirota_nonlinear_1977} considered the discrete integrable analogue of the sine-Gordon equation.
Then it was the work of \cite{bobenko_discrete_1996} that clarified the relationship between the two, linking the two approaches to discrete theory.
Furthermore, the result \cite{inoguchi_discrete_2014} used the fact that certain isoperimetric deformation of space curves map out a $K$-surface, and showed that their approach to discrete moving frames results in the same discretization as that of $K$-surfaces.

Here we will establish another equivalence between the two approaches, albeit each discretizing a different object.

On one hand, \emph{isothermic surfaces}, first examined by Bour \cite{bour_theorie_1862} and studied by classical geometers such as Bianchi \cites{bianchi_complementi_1905, bianchi_ricerche_1904}, Calapso \cite{calapso_sulla_1903}, and Darboux \cite{darboux_sur_1899}, constitute an integrable class of surfaces with its own \emph{Darboux transformation} and Bianchi permutability; such integrable nature has been revisited in modern times (see, for example, \cites{burstall_isothermic_2006, burstall_isothermic_2011, burstall_curved_1997, hertrich-jeromin_remarks_1997, hertrich-jeromin_mobius_2001, cieslinski_isothermic_1995}).
In fact, the permutability of Darboux transformations leads to \emph{discrete isothermic surfaces} as defined in \cite{bobenko_discrete_1996-1} via a certain cross-ratios condition.
Building on this, the work \cite{muller_semi-discrete_2013} examined the class of \emph{semi-discrete isothermic surfaces}, where the surface is parametrized by a smooth parameter and a discrete parameter, recovering various analogous theory including the concept of Christoffel duals.

The connection between transformation theory and semi-discrete isothermic surfaces has been identified in \cite{burstall_semi-discrete_2016}.
By prescribing space curves with a polarization, this work defined a \emph{Darboux pair} of curves as a pair of curves enveloping a circle congruence preserving the polarization.
After showing such a Darboux transformation of a polarized curve enjoys a certain cross-ratio condition, they characterized semi-discrete isothermic surfaces as iterations of Darboux transformations of polarized curves.
Hence, the smooth parameter of a semi-discrete isothermic surface describes the curves, while the discrete parameter describes the Darboux transformation.

In fact, holomorphic functions can be viewed as isothermic surfaces whose images are in the plane, identified with the complex plane, with the prototypical application being the Weierstrass representation of minimal surfaces \cite{weierstrass_untersuchungen_1866}.
Discrete holomorphic functions \cite{bobenko_discrete_1996-1} and \emph{semi-discrete holomorphic functions} have been defined analogously, paving the way to defining Weierstrass representation for discrete and semi-discrete minimal surfaces \cites{muller_semi-discrete_2013, bobenko_discrete_1996-1}.
Therefore, one can view semi-discrete holomorphic functions as successive Darboux transformations of polarized plane curves.

On the other hand, works such as \cites{lamb_solitons_1976, goldstein_korteweg-vries_1991} explored the modified Korteweg--de Vries (mKdV) equation arising as the compatibility condition of the moving frames describing certain motions of space curves.
Then it was the result of \cites{matsuura_discrete_2012, matsuura_discrete_2012-1, inoguchi_explicit_2012, inoguchi_motion_2012} that revealed a relationship between various semi-discrete and discrete analogues of the KdV and mKdV equations (see, for example, \cites{hirota_nonlinear_1977-1, hirota_discretization_1998}), and the compatibility condition associated with discretization of the moving frames.
In fact, by considering certain motions of discrete space curves and the corresponding moving frames, \cite{kaji_linkage_2019} recovered a particular form of the \emph{semi-discrete potential mKdV equation} that can be found in \cite{wadati_backlund_1974} in the context of the transformation and permutability of solutions to the smooth mKdV equation, suggesting a close relationship between the moving frames approach and transformation theory.

It is this relationship that we identify in this paper.

In Section \ref{sect:Darboux}, we focus on the semi-discrete isothermic surfaces whose images are in the plane, i.e., semi-discrete holomorphic functions.
First, we consider the Darboux transformations of smooth polarized curves, adapting the results in \cite{burstall_semi-discrete_2016} for plane curves where the complex number system replaces the Clifford algebra; in addition, we identify the Darboux transformations that preserve the arc-length polarization, also known as tractrix construction or bicycle correspondence (see, for example, \cites{bor_tire_2020, tabachnikov_bicycle_2017, hoffmann_discrete_2008}).
Then, we define \emph{infinitesimal Darboux transformation} of discrete polarized curves in Definition \ref{def:dFlow}, and examine when the infinitesimal transformation preserves the discrete arc-length polarization (see Definition \ref{def:discArcPol}) in Proposition \ref{prop:discArc}.

Such examination is crucial for connecting the infinitesimal Darboux transformation to the aforementioned continuous motion of curves, which is the subject of our attention in Section \ref{sect:isoperimetric}.
After briefly reviewing the material, we precisely define the potential function used in the semi-discrete potential mKdV equation in terms of the geometric data of the continuous motion.
Such precise geometric definition of the potential function allows us to identify the continuous motion yielding the semi-discrete potential mKdV equation as an infinitesimal Darboux transformation preserving discrete arc-length polarization in Proposition \ref{prop:isoDarboux}, allowing us to obtain various characterizations of the smooth motion of discrete plane curves resulting in semi-discrete potential mKdV equation, in Theorem \ref{thm:semidiscrete}.
Finally, as an application, we introduce an efficient approach to obtaining the semi-discrete potential mKdV equation without involving any frames, directly from the infinitesimal Darboux transformation equation.

\section{Infinitesimal Darboux transformation of discrete curves}\label{sect:Darboux}

Let $\mathbb{R}^2$ be the Euclidean plane with the standard inner product denoted by $\cdot$, and denote the length of a vector by $| \:\: |$.
Throughout the paper, we identify $\mathbb{R}^2 \cong \mathbb{C}$ via $(x,y) \sim z = x + i y$, and $\bar{z}$ is the usual complex conjugation of $z$. 
 
\subsection{Darboux transformation}
Darboux transformations of curves in $\mathbb{R}^n$ polarized by a (non-vanishing) quadratic differential without poles are defined in \cite{burstall_semi-discrete_2016} (for those with poles, see \cite{fuchs_transformations_2019}).
In this paper, we treat the special case when the curve is in $\mathbb{R}^2$, adapting the results of \cite{burstall_semi-discrete_2016} in terms of complex multiplication in lieu of Clifford multiplication.

For this, let $I \subset \mathbb{R}$ be an interval parametrized by $s$.
As in \cite[Definition 2.1]{burstall_semi-discrete_2016}, two curves $x, \hat{x}: I \to \mathbb{C}$ are said to be a \emph{Ribaucour pair} if they envelop a common circle congruence, that is, $x(s)$ and $\hat{x}(s)$ are both tangent to a circle $c(s)$.
Recalling from \cite[Definition and Lemma 2.2]{burstall_semi-discrete_2016} (see also \cite{muller_semi-discrete_2017}) that the \emph{tangential cross ratio} $\cratio : I \to \mathbb{C}$ of the two curves $x,\hat{x}:I \to \mathbb{C}$ is defined by
	\[
		\cratio := \frac{x'\hat{x}'}{(x-\hat{x})^2},
	\]
where $' = \frac{\dif{}}{\dif{s}}$, we have that the two curves are a Ribaucour pair if and only if the tangential cross ratio is real everywhere.

To consider Darboux transformations of curves as in \cite{burstall_semi-discrete_2016}, we consider curves \emph{polarized} by a non-vanishing quadratic differential $\frac{\dif{s}^2}{m}$ with $m : I \to \mathbb{R}^{\times}$.

\begin{remark}
	\emph{Polarizations} are quadratic differentials introduced for a coordinate-free approach to surface classes characterized by the existence of special coordinate systems such as the $L$-isothermic surfaces \cite[Definition 2.1]{musso_bianchi-darboux_2000} or isothermic surfaces \citelist{\cite{bernstein_non-special_2001}*{Definition 2.16} \cite{hertrich-jeromin_mobius_2001}*{p.\ 190} \cite{bobenko_painleve_2000}*{Lemma 2.3.2}}.
	Following this principle, polarized curves were introduced in \cite{burstall_semi-discrete_2016} to define Darboux transformations independently of parametrizations.
\end{remark}

\begin{definition}[{\cite[Definition 2.4]{burstall_semi-discrete_2016}}]
	For some constant $\mu \in \mathbb{R}^\times$, two polarized curves $x,\hat{x} : (I,\frac{\dif{s}^2}{m}) \to \mathbb{C}$ are called a \emph{Darboux pair with parameter $\mu$} if
		\begin{equation}\label{cr}
			\cratio \dif{s}^2 = \frac{x'\hat{x}'}{(x-\hat{x})^2} \dif{s}^2 = \frac{\mu}{m} \dif{s}^2.
		\end{equation}
	Either curve of a Darboux pair is called a \emph{Darboux transform} of the other curve.
\end{definition}

\begin{remark}
The definition of Darboux transformations allows us to check the following facts:
	\begin{itemize}
		\item Any Darboux pair is a Ribaucour pair, i.e., they envelop a common circle congruence.
		\item Darboux transformations are independent of the choice of parametrization of the curve.
		\item Darboux transformations are dependent on the choice of polarization (see Figure \ref{fig:dT}).
	\end{itemize}
\end{remark}

\begin{figure}
	\includegraphics{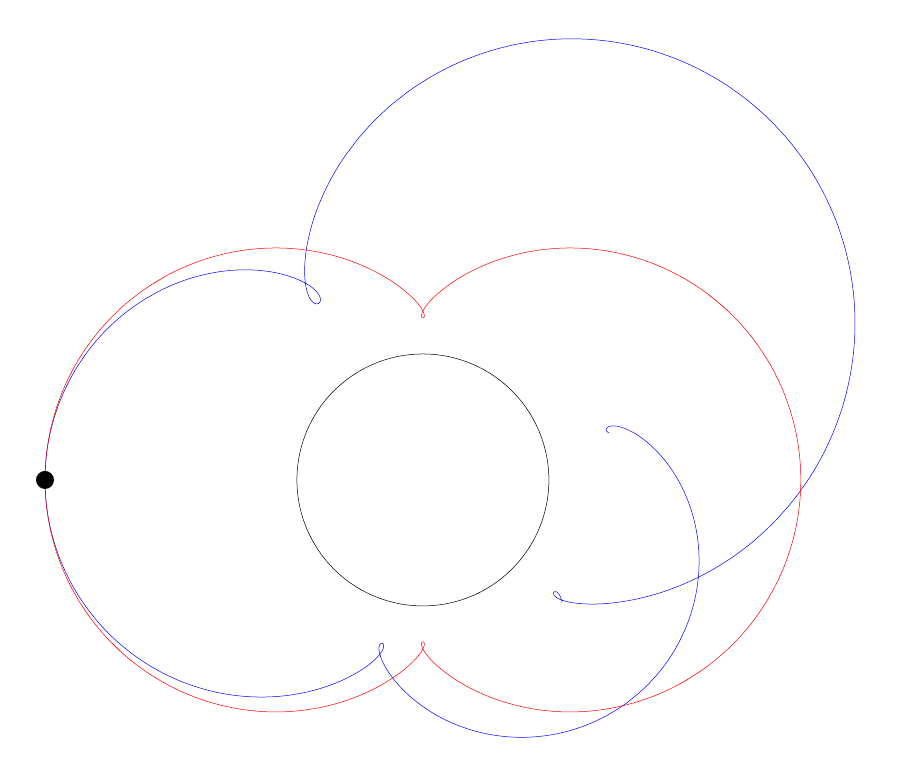}
	\caption{Two Darboux transformations (colored in red and blue) of a circle (colored in black) with identical parameter $\mu$ and initial condition (marked by a point), but with different polarizations.}
	\label{fig:dT}
\end{figure}

By \eqref{cr}, given a polarized curve $x$, a Darboux transform $\hat{x}$ of $x$ can be found by solving the Riccati-type equation
	\[
		\hat{x}' = \frac{\mu}{m}(x-\hat{x})(x')^{-1}(x-\hat{x}),
	\] 
determined by the choice of parameter $\mu$ and an initial condition, giving us a $3$-parameter family of Darboux transforms $\hat{x}$.


Let $x, \hat{x}$ be a Darboux pair with parameter $\mu$; then we can generally find $y := x+\frac{1}{r}x' = \hat{x}+\frac{1}{\hat{r}}\hat{x}'  \in \mathbb{C}$ for some $r, \hat{r} : I \to \mathbb{R}$.
Under such setting, we prove the following two lemmata that will be useful later.

\begin{lemma}[{\cite[Equation (2.2)]{burstall_semi-discrete_2016}}]\label{lem:rlemma1}
	If $x,\hat{x}$ is a Darboux pair, then $r$ and $\hat{r}$ are related by
		\[
			\frac{\hat{r}}{r}=-\frac{|x-\hat{x} |^2}{|x'|^2}\frac{\mu}{m}.
		\]
\end{lemma}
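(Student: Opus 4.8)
The plan is to compute $\hat r/r$ directly from the definition of the common tangent point $y$, eliminate $\hat x'$ using the Darboux condition \eqref{cr}, and then exploit that $r$ and $\hat r$ are real to pin down the last unknown quantity.

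Setting $d := x - \hat x$, I would first combine the two expressions $y = x + \frac1r x' = \hat x + \frac1{\hat r}\hat x'$ into $\frac1{\hat r}\hat x' = \frac1r x' + d$, which rearranges to
\[
  \hat x' = \frac{\hat r}{r}\,(x' + r\,d),\qquad\text{hence}\qquad \frac{\hat r}{r} = \frac{\hat x'}{x' + r\,d}\,.
\]
Since $\mu/m$ never vanishes, \eqref{cr} forces $x'$ never to vanish and may be rewritten as $\hat x' = \frac{\mu}{m}\,d^2/x'$; substituting this and dividing numerator and denominator by $d^2$ gives
\[
  \frac{\hat r}{r} = \frac{\mu}{m}\cdot\frac{1}{w^2 + r\,w},\qquad w := \frac{x'}{x - \hat x}\,.
\]

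The heart of the argument is evaluating $w^2 + rw$. As $r,\hat r\in\mathbb{R}^\times$, the left-hand side above forces $w^2 + rw\in\mathbb{R}^\times$; conjugating and using $r\in\mathbb{R}$ turns this into $w^2 - \bar w^2 = -r(w - \bar w)$, that is, $(w - \bar w)(w + \bar w + r) = 0$. The existence of a finite $\hat r$ already requires the tangent line of $x$ at $x(s)$ to miss $\hat x(s)$, equivalently $w\notin\mathbb{R}$; hence $w\ne\bar w$ and therefore $r = -(w + \bar w)$. Then $w^2 + rw = w\bigl(w - (w+\bar w)\bigr) = -w\bar w = -|w|^2$, which gives
\[
  \frac{\hat r}{r} = -\frac{\mu}{m}\cdot\frac{1}{|w|^2} = -\frac{|x - \hat x|^2}{|x'|^2}\cdot\frac{\mu}{m}\,,
\]
as desired.

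The only genuinely delicate step is extracting $r = -(w + \bar w)$: everything else is formal manipulation, but this uses $w\notin\mathbb{R}$, which is exactly the (generic) non-degeneracy implicit in the hypothesis that $y$, $r$, $\hat r$ exist. As a check on the magnitude, recall that a Darboux pair is a Ribaucour pair, so $x(s)$ and $\hat x(s)$ lie on a common circle tangent to both curves; $y$ is then the meeting point of the two tangents to that circle and hence equidistant from the points of tangency, so $|x'|/|r| = |y-x| = |y-\hat x| = |\hat x'|/|\hat r|$, which together with $|x'|\,|\hat x'| = |\mu/m|\,|x-\hat x|^2$ from \eqref{cr} reproduces $|\hat r/r| = |\mu/m|\,|x-\hat x|^2/|x'|^2$.
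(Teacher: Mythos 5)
Your proof is correct, but it follows a genuinely different route from the paper's. The paper argues geometrically: since a Darboux pair is a Ribaucour pair, the tangent segments from the intersection point $y$ have equal length, $|x-y|=|\hat x-y|$, which immediately gives the \emph{magnitude} $\frac{\hat r^2}{r^2}=\frac{|x-\hat x|^4}{|x'|^4}\frac{\mu^2}{m^2}$; the \emph{sign} is then fixed by a separate normalization argument (placing $x,\hat x$ symmetric about the real axis so that $(x-\hat x)^2<0$ and comparing signs of $\hat r/r$ and $x'\hat x'$). You instead never invoke the enveloped circle: you eliminate $\hat x'$ via \eqref{cr}, reduce everything to $w=x'/(x-\hat x)$, and use the realness of $r,\hat r$ to force $r=-(w+\bar w)$, which delivers sign and magnitude in a single computation. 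Two remarks on what each approach buys: your identity $r=-(w+\bar w)=-2\,\mathrm{Re}\bigl(\tfrac{x'}{x-\hat x}\bigr)$ is precisely the formula $\tfrac r2=\tfrac{(\hat x-x)\cdot x'}{|\hat x-x|^2}$ of Lemma \ref{lem:rlemma2} written complex-analytically, so your argument in effect proves that lemma as a by-product and makes the sign determination automatic, whereas the paper's sign argument is a slightly delicate symmetry/WLOG step; on the other hand the paper's route makes the geometric mechanism (equal tangent lengths to the common circle) visible, which your version only recovers in the closing sanity check. One small imprecision: your justification of $w\notin\mathbb{R}$ ("existence of a finite $\hat r$ requires the tangent line of $x$ to miss $\hat x$") is not literally right — if $w\in\mathbb{R}$, then \eqref{cr} forces both tangent lines to coincide with the chord through $x$ and $\hat x$, and finite $r,\hat r$ still exist, only non-uniquely, so the ratio $\hat r/r$ is simply not determined there. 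The honest statement is that the lemma (and the paper's own proof, which needs a genuine tangent circle with $y$ an external intersection point) presupposes this nondegenerate configuration, exactly the genericity hidden in the paper's phrase "we can generally find $y$"; since you flag this as the implicit hypothesis anyway, the argument stands.
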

\begin{proof}
	If $(x,\hat{x})$ are a Darboux pair, then they envelop a common circle congruence; hence, the distance between $y$ and the corresponding points $x$ and $\hat{x}$ are the same, that is, $|x-y|^2=|\hat{x}-y|^2$.
	From this, we obtain by \eqref{cr},
		\[
			\frac{\hat{r}^2}{r^2}
				= \left | \frac{\hat{x}'^2}{x'^2} \right |
				= \frac{|x-\hat{x}|^4}{|x'|^4} \frac{\mu^2}{m^2}.
		\]
	Now, without loss of generality, assume $x$ and $\hat{x}$ are symmetric about the real-axis, at the point under consideration, so that $(x-\hat{x})^2$ is always negative while $\frac{\hat{r}}{r}$ and $x'\hat{x}'$ have the same sign.
	Thus $\frac{\hat{r}}{r}$ and the cross ratio $\frac{x' \hat{x}'}{(x-\hat{x}')^2}=\frac{\mu}{m}$ have different signs, giving us the desired conclusion.
\end{proof}

\begin{lemma}\label{lem:rlemma2}
	If $x,\hat{x}$ is a Darboux pair, then $r$ and $\hat{r}$ satisfy
		\[
			\frac{r}{2} =\frac{(\hat{x}-x) \cdot x'}{| \hat{x}-x |^2}\quad\text{and}\quad
			\frac{\hat{r}}{2} =\frac{(x - \hat{x}) \cdot \hat{x}'}{| \hat{x}-x |^2}.
		\]
\end{lemma}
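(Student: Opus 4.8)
The plan is to translate everything into the two vectors pointing from $y$ to the contact points, and then use nothing more than the equal‑tangent‑length identity already exploited in the proof of Lemma \ref{lem:rlemma1}. Set $a := y - x = \tfrac1r\,x'$ and $b := y - \hat{x} = \tfrac1{\hat r}\,\hat{x}'$, so that $x' = r\,a$, $\hat{x}' = \hat r\,b$, and $\hat{x} - x = (y-x)-(y-\hat{x}) = a - b$. Since $x,\hat{x}$ envelop a common circle and $y$ lies on both of its tangent lines (at $x$ and at $\hat x$), the two tangent lengths from $y$ coincide, i.e.\ $|a| = |b|$; this is exactly the relation $|x-y|^2 = |\hat{x}-y|^2$ recorded in the proof of Lemma \ref{lem:rlemma1}.

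The main computation is then short. Using $|a| = |b|$,
\[
 |\hat{x}-x|^2 = |a-b|^2 = |a|^2 - 2\,a\cdot b + |b|^2 = 2\bigl(|a|^2 - a\cdot b\bigr),
\]
while, using $x' = r\,a$,
\[
 (\hat{x}-x)\cdot x' = (a-b)\cdot(r\,a) = r\,\bigl(|a|^2 - a\cdot b\bigr).
\]
Dividing the second display by the first gives $\dfrac{(\hat{x}-x)\cdot x'}{|\hat{x}-x|^2} = \dfrac r2$, which is the first asserted formula. For the second, I would simply interchange the roles of $x$ and $\hat x$ (hence $a\leftrightarrow b$ and $r\leftrightarrow\hat r$): from $\hat{x}' = \hat r\,b$ one gets $(x-\hat{x})\cdot\hat{x}' = (b-a)\cdot(\hat r\,b) = \hat r\,(|b|^2 - a\cdot b) = \tfrac{\hat r}{2}\,|\hat{x}-x|^2$, since $|b|^2 - a\cdot b = |a|^2 - a\cdot b$.

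I do not expect any serious obstacle. The only points that need a word of care are the justification that $|a| = |b|$ — i.e.\ that $y$ is genuinely equidistant from $x$ and $\hat{x}$, which is the enveloping‑circle property already used — and the observation that the common factor $|a|^2 - a\cdot b = \tfrac12|\hat{x}-x|^2$ is nonzero (because $x\neq\hat{x}$, as otherwise the tangential cross ratio is undefined), so that cancelling it in the quotient is legitimate.
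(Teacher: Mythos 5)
Your proof is correct and is essentially the paper's own argument: both hinge on the equidistance $|x-y|=|\hat{x}-y|$ coming from the enveloped circle, followed by expanding a squared norm (your change of variables $a=y-x$, $b=y-\hat{x}$ is just a repackaging of the paper's direct expansion of $\left|\hat{x}-x-\tfrac{1}{r}x'\right|^2$). The extra remarks on $x\neq\hat{x}$ and symmetry for the second identity match the paper's ``shown similarly.''
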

\begin{proof}
	We first note that
		\begin{align*}
			| x-y |^2 & = \left| x-x-\tfrac{1}{r}x' \right|^2=\tfrac{1}{r^2} | x' |^2, \\
			| \hat{x}-y |^2 & = \left| \hat{x}-x-\tfrac{1}{r}x' \right|^2
				= | \hat{x}-x |^2 - \tfrac{2}{r} (\hat{x}-x) \cdot x'+\tfrac{1}{r^2}|x'|^2.
		 \end{align*}
	 Hence, the fact that $| x-y |^2 = | \hat{x}-y |^2$ gives us one of the statements.
	 The other statement is shown similarly.
\end{proof}

\subsection{Arc-length polarization}
We now consider arc-length polarized curves and their transformations.
\begin{definition}[{\cite[p.\ 48]{burstall_semi-discrete_2016}}]
	For a polarized curve $x:(I,\frac{\dif{s}^2}{m}) \to \mathbb{C}$, the differential $\frac{\dif{s}^2}{m}$ is called the \emph{arc-length polarization} of $x$ if
		\[
			\frac{\dif{s}^2}{m} = | {\dif{x}} |^2.
		\]
\end{definition}

\begin{remark}\label{rem:arclength}
	If $x:(I,\frac{\dif{s}^2}{m}) \to \mathbb{C}$ is an arc-length polarized curve, then $s$ is an arc-length parameter if and only if $m \equiv 1$.
\end{remark}

Now we would like to see when the arc-length polarization is preserved under Darboux transformations, hence becoming bicycle correspondence (see, for example, \cites{burstall_semi-discrete_2016, tabachnikov_bicycle_2017}).
First, introducing $\Lambda:=|\hat{x}-x|^2$, we have via Lemmata \ref{lem:rlemma1} and \ref{lem:rlemma2} that
	\begin{equation}\label{eqn:lambda}
 		 \Lambda' = 2(\hat{x}-x) \cdot (\hat{x}'-x')
			= ( -\hat{r}- r) \Lambda
			= r\left(\frac{\mu}{m |x'|^2} \Lambda -1\right)\Lambda.
	\end{equation}
Now, if $\frac{\dif{s}^2}{m}$ is an arc-length polarization of $x$ so that $\frac{1}{m} \equiv |x'|^2$, then uniqueness of solutions to ordinary differential equations tells us that if $\Lambda = \frac{1}{\mu}$ at one point $s_0 \in I$, then $\Lambda \equiv \frac{1}{\mu}$, a condition that is equivalent to $\frac{\dif{s}^2}{m} = |\hat{x}'|^2 \dif{s}^2 = |{\dif{\hat{x}}}|^2$ by \eqref{cr}.
Therefore, we have shown the following:
\begin{proposition}\label{prop:arclength}
	Let $x,\hat{x}:(I,\frac{dx^2}{m}) \to \mathbb{C}$ be a Darboux pair with parameter $\mu$, where $\frac{\dif{s}^2}{m}$ is the arc-length polarization of $x$.
	Then $\frac{\dif{s}^2}{m}$ is also the arc-length polarization of $\hat{x}$ if and only if $|\hat{x}-x|^2=\tfrac{1}{\mu}$ at one point $s_0 \in I$.
\end{proposition}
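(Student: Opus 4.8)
The plan is to extract the entire statement from the differential equation \eqref{eqn:lambda} for $\Lambda := |\hat{x}-x|^2$. Since $\frac{\dif{s}^2}{m}$ is the arc-length polarization of $x$, we have $\frac{1}{m} \equiv |x'|^2$, so the coefficient $\frac{\mu}{m|x'|^2}$ appearing in \eqref{eqn:lambda} is identically $\mu$, and \eqref{eqn:lambda} collapses to the autonomous-in-$\Lambda$ Riccati-type equation
\begin{equation*}
	\Lambda' = r\,(\mu\Lambda-1)\,\Lambda.
\end{equation*}
The key observation is that the constant function $\Lambda \equiv \tfrac{1}{\mu}$ solves this equation, because then the factor $\mu\Lambda - 1$ vanishes.

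Next I would invoke uniqueness of solutions of initial value problems for this ODE. By Lemma \ref{lem:rlemma2} one has $r = \tfrac{2(\hat{x}-x)\cdot x'}{|\hat{x}-x|^2}$, which is a smooth function of $s$ on $I$ (the curves being regular with $\hat{x}\neq x$), so the right-hand side of the reduced equation is continuous in $s$ and a polynomial---hence locally Lipschitz---in $\Lambda$. Consequently, if $\Lambda(s_0)=\tfrac{1}{\mu}$ for some $s_0\in I$, then $\Lambda$ coincides with the constant solution $\tfrac{1}{\mu}$ throughout $I$.

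It remains to recognize $\Lambda\equiv\tfrac{1}{\mu}$ as exactly the assertion that $\frac{\dif{s}^2}{m}$ is the arc-length polarization of $\hat{x}$. Dividing \eqref{cr} by $\dif{s}^2$ gives $x'\hat{x}'=\tfrac{\mu}{m}(x-\hat{x})^2$; taking moduli and using $|x'|^2=\tfrac{1}{m}$ yields $|\hat{x}'|^2=\tfrac{\mu^2}{m}\Lambda^2$. Hence $|{\dif{\hat{x}}}|^2=|\hat{x}'|^2\,\dif{s}^2=\tfrac{\dif{s}^2}{m}$ on $I$ if and only if $\mu^2\Lambda^2\equiv 1$, i.e.\ (as $\Lambda>0$) if and only if $\Lambda\equiv\tfrac{1}{\mu}$---note that when $\mu<0$ this never occurs, just as $|\hat{x}-x|^2=\tfrac{1}{\mu}$ never occurs, so the claimed equivalence is vacuous and we may take $\mu>0$. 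The ``only if'' direction now follows at once, since $\frac{\dif{s}^2}{m}=|{\dif{\hat{x}}}|^2$ forces $\Lambda\equiv\tfrac{1}{\mu}$ and in particular $|\hat{x}-x|^2=\tfrac{1}{\mu}$ at $s_0$; the ``if'' direction follows by combining $\Lambda(s_0)=\tfrac{1}{\mu}$ with the uniqueness step to get $\Lambda\equiv\tfrac{1}{\mu}$, whence $|\hat{x}'|^2\equiv\tfrac{\mu^2}{m}\cdot\tfrac{1}{\mu^2}=\tfrac{1}{m}$.

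I do not expect a real obstacle here: once \eqref{eqn:lambda} is available the argument is just the identification of $\Lambda\equiv 1/\mu$ as the relevant equilibrium plus an ODE-uniqueness argument, together with the modulus bookkeeping coming from \eqref{cr}. The only points needing a little care are verifying that $r$ is regular enough for the uniqueness theorem and keeping track of the sign of $\mu$.
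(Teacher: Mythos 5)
Your proof follows essentially the same route as the paper: the paper establishes this proposition in the paragraph preceding it by specializing \eqref{eqn:lambda} under $\tfrac{1}{m}\equiv|x'|^2$, noting that $\Lambda\equiv\tfrac{1}{\mu}$ is a solution, invoking uniqueness of ODE solutions, and translating $\Lambda\equiv\tfrac{1}{\mu}$ back into the arc-length condition for $\hat{x}$ via \eqref{cr}; you simply make the Lipschitz hypothesis and the modulus bookkeeping explicit. One small caveat in your side remark: for $\mu<0$ the correct reading of $\mu^2\Lambda^2\equiv 1$ with $\Lambda>0$ is $\Lambda\equiv-\tfrac{1}{\mu}$, not $\Lambda\equiv\tfrac{1}{\mu}$, so the ``only if'' direction is not literally vacuous there; however, in the non-degenerate setting where $r$ is defined this case would force $r\equiv 0$ (and the paper itself tacitly treats $\mu>0$ at this point), so this does not affect the substance of the argument.
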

 
Thus there is a $2$-parameter family of possible Darboux transforms $\hat{x}$ keeping arc-length condition, given by choosing $\mu$ and the initial condition at the correct distance.
 
\subsection{Infinitesimal Darboux transformation of discrete polarized curves}

Viewing semi-discrete holomorphic functions as semi-discrete isothermic surfaces whose image is contained in a plane, the work of \cite{burstall_semi-discrete_2016} shows that semi-discrete holomorphic functions can be characterized as successive Darboux transformations of a smooth polarized curve, i.e.\ the smooth parameter of a semi-discrete holomorphic function parametrizes the curve while the discrete parameter represents the transformation.
In this section, we switch the roles of the smooth and discrete parameters: We let the discrete parameter of a semi-discrete holomorphic function represent the parameter of a discrete curve, and smooth parameter represent the continuous transformation, allowing us to consider semi-discrete holomorphic functions as the images of \emph{infinitesimal Darboux transformations}, or \emph{Darboux deformations}, of discrete polarized curves.

For this, let $\Sigma \subset \mathbb{Z}$ be a discrete interval (see, for example, \cite[\S 2.3]{burstall_discrete_2020}), and let $\mu$ be a strictly positive or negative function on (unoriented) edges of $\Sigma$.
We call $(\Sigma,\tfrac{1}{\mu})$ a \emph{discrete polarized domain}.
A \emph{discrete polarized curve} is a discrete curve $x:(\Sigma,\tfrac{1}{\mu}) \to \mathbb{C}$ whose polarization is given by some such function $\mu$.

\begin{definition}\label{def:dFlow}
	Let $x^0 : (\Sigma, \frac{1}{\mu}) \to \mathbb{C}$ be a discrete polarized curve, and let $m : I \to \mathbb{R}^\times$.
	An \emph{infinitesimal Darboux transformation with parameter function $m$} is a smooth motion $x : \Sigma \times I \to \mathbb{C}$ of the discrete curve $x^0$, i.e.\ $x_n(s_0) = x^0_n$ for some $s_0 \in I$, so that on every edge $(n,n+1)$,
		\begin{equation}\label{eqn:dFlow}
			\frac{x_n' x_{n+1}'}{(x_n - x_{n+1})^2} \dif{s}^2 = \frac{\mu_{(n,n+1)}}{m} \dif{s}^2.
		\end{equation}
\end{definition}
Given a discrete polarized curve $x^0 : (\Sigma, \frac{1}{\mu}) \to \mathbb{C}$ and a function $m : I \to \mathbb{R}^\times$, one can obtain an infinitesimal Darboux transformation with parameter function $m$ via a given initial condition curve $x_{n_0}(s) : I \to \mathbb{C}$ polarized by the quadratic differential $\frac{\dif{s}^2}{m}$.
Comparing to the usual Darboux transformations, this initial polarized curve represents a function worth of spectral parameters and initial conditions.

With discrete polarized curves defined, one can also consider the arc-length polarization of a discrete curve:
\begin{definition}\label{def:discArcPol}
For a discrete polarized curve $x : (\Sigma, \frac{1}{\mu}) \to \mathbb{C}$, $\mu$ is called the \emph{(discrete) arc-length polarization} of $x$ if, on every edge $(n,n+1)$,
	\begin{equation}
		\frac{1}{\mu_{(n,n+1)}} = |x_n - x_{n+1}|^2.
	\end{equation}
\end{definition}
\begin{remark}
	As in the smooth case (see Remark \ref{rem:arclength}), a discrete polarized curve with arc-length polarization is \emph{arc-length parametrized} as in \cite[Definition 2.4]{hoffmann_discrete_2009} (see also \cites{hoffmann_discrete_2000, hoffmann_discrete_2004}) if and only if $\mu \equiv 1$.
\end{remark}
Now we identify the condition for infinitesimal Darboux transformations keeping the arc-length polarization.
\begin{proposition}\label{prop:discArc}
	For a discrete polarized curve $x^0 : (\Sigma, \frac{1}{\mu}) \to \mathbb{C}$ with arc-length polarization $\mu$, let $x$ be an infinitesimal Darboux transformation of $x^0 = x(0)$ with parameter function $m$.
	Then $\mu$ will be the (discrete) arc-length polarization of $x_n(s)$ for any $s \in I$ if and only if $\frac{ds^2}{m}$ is the (smooth) arc-length polarization of the initial condition  curve $x_{n_0}(s)$ for some $n_0 \in \Sigma$.
\end{proposition}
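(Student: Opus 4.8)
The plan is to notice that an infinitesimal Darboux transformation is, \emph{edge by edge}, a smooth Darboux pair, and then to import the smooth results proved above. Comparing \eqref{eqn:dFlow} with \eqref{cr} shows that for every edge $(n,n+1)$ of $\Sigma$ the two smooth curves $x_n(\cdot),\,x_{n+1}(\cdot):(I,\tfrac{\dif{s}^2}{m})\to\mathbb{C}$ form a Darboux pair with parameter $\mu_{(n,n+1)}$, so Lemmata \ref{lem:rlemma1}--\ref{lem:rlemma2}, the identity \eqref{eqn:lambda}, and Proposition \ref{prop:arclength} may all be invoked on each edge. Writing $\Lambda_n(s):=|x_n(s)-x_{n+1}(s)|^2$ for the edge quantity $\Lambda$, I would also use the pointwise equivalence already contained in the proof of Proposition \ref{prop:arclength}: when $\tfrac{\dif{s}^2}{m}$ is the arc-length polarization of $x_n$, then at each $s$ one has $\Lambda_n(s)=\tfrac{1}{\mu_{(n,n+1)}}$ if and only if $|x_{n+1}'(s)|^2=\tfrac1m$ (both being readings of \eqref{cr}).

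For the ``only if'' direction I would assume $\mu$ is the discrete arc-length polarization of every $x_n(s)$ and fix any edge $(n_0,n_0+1)$ (nothing to prove if $\Sigma$ is a single vertex). Then $\Lambda_{n_0}\equiv\tfrac1{\mu_{(n_0,n_0+1)}}$ is constant, so feeding $\Lambda_{n_0}'=0$ into the edge version of \eqref{eqn:lambda} and cancelling the nonzero factor $\Lambda_{n_0}$ leaves $r_{n_0}\bigl(\tfrac{\mu_{(n_0,n_0+1)}}{m\,|x_{n_0}'|^2}\Lambda_{n_0}-1\bigr)=0$; since $\mu_{(n_0,n_0+1)}\Lambda_{n_0}=1$, wherever $r_{n_0}\neq0$ this forces $\tfrac1m=|x_{n_0}'|^2$, and by continuity on all of $I$, so $\tfrac{\dif{s}^2}{m}$ is the arc-length polarization of $x_{n_0}$ — the required vertex.

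For the ``if'' direction I would start from the hypothesis that $\tfrac{\dif{s}^2}{m}$ is the arc-length polarization of some $x_{n_0}$, and note that at $s=s_0$ the motion is $x^0$, which carries the polarization $\mu$, so $\Lambda_n(s_0)=|x_n^0-x_{n+1}^0|^2=\tfrac1{\mu_{(n,n+1)}}$ on every edge. Applying Proposition \ref{prop:arclength} to the Darboux pair $(x_{n_0},x_{n_0+1})$ — whose hypothesis $\Lambda_{n_0}(s_0)=\tfrac1{\mu_{(n_0,n_0+1)}}$ is met — gives that $\tfrac{\dif{s}^2}{m}$ is also the arc-length polarization of $x_{n_0+1}$, which by the pointwise equivalence above is the same as $\Lambda_{n_0}(s)\equiv\tfrac1{\mu_{(n_0,n_0+1)}}$, i.e.\ the discrete arc-length condition on the edge $(n_0,n_0+1)$ for every $s$. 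The same step now applies to $(x_{n_0+1},x_{n_0+2})$ and, symmetrically, to $(x_{n_0-1},x_{n_0})$ (again using $s_0$ as the common ``one point''); since $\Sigma$ is a discrete interval, a short induction in both directions propagates the arc-length polarization to every $x_n$ and hence the discrete arc-length condition to every edge and every $s\in I$.

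I expect the only genuinely delicate point to be this bootstrapping along $\Sigma$: one must use that Proposition \ref{prop:arclength} both \emph{consumes} and \emph{produces} the statement ``$\tfrac{\dif{s}^2}{m}$ is the arc-length polarization of a vertex curve'', which is exactly why the pointwise equivalence ``$\Lambda_n\equiv\tfrac1\mu$ $\Leftrightarrow$ arc-length polarization of $x_{n+1}$'' should be isolated first. The degenerate configurations in which an enveloped edge-circle degenerates and $r_{n_0}$ vanishes are a harmless technicality, handled by continuity as indicated above.
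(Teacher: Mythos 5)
Your proposal is correct and takes essentially the same route as the paper's proof: observing via \eqref{eqn:dFlow} and \eqref{cr} that each edge gives a smooth Darboux pair polarized by $\frac{\dif{s}^2}{m}$, reading $\frac{1}{m}=|x_{n_0}'|^2$ off \eqref{eqn:lambda} when $\Lambda_{n_0}\equiv\frac{1}{\mu_{(n_0,n_0+1)}}$ is constant, and deducing the converse from Proposition \ref{prop:arclength} using $\Lambda_n(s_0)=\frac{1}{\mu_{(n,n+1)}}$ at the initial time, with your edge-to-edge propagation along $\Sigma$ simply making explicit what the paper compresses into ``a direct consequence of Proposition \ref{prop:arclength}.'' One caveat: your continuity argument for the locus where $r_{n_0}=0$ only covers the closure of $\{r_{n_0}\neq 0\}$ and would not settle the degenerate situation where $r_{n_0}$ vanishes on a whole subinterval, but the paper's own proof passes over this case silently (it reads \eqref{eqn:lambda} as immediately giving $\frac{1}{m}=|x_n'|^2$), so on this point you are, if anything, more careful than the printed argument.
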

\begin{proof}
	The definition of infinitesimal Darboux transformation \eqref{eqn:dFlow} tells us the pair of smooth curves $x_n(s)$ and $x_{n+1}(s)$ are a Darboux pair with parameter $\mu_{(n,n+1)}$ on any edge $(n,n+1)$, where $\frac{ds^2}{m}$ polarizes both curves.
	First, assume that $\mu$ is the (discrete) arc-length polarization of $x_n(s)$ for any $s \in I$, so that
		\[
			\frac{1}{\mu_{(n,n+1)}} = |x_n - x_{n+1}|^2 = \Lambda
		\]
	is constant in $s$.
	Then \eqref{eqn:lambda} tells us that $\frac{1}{m} = |x_n'|^2$, and applying Proposition~\ref{prop:arclength} gives us one direction.
	The other direction is a direct consequence of Proposition~\ref{prop:arclength}.
\end{proof}

%

\section{Certain isoperimetric deformations as infinitesimal Darboux transformations}\label{sect:isoperimetric}

\subsection{Preliminaries}
We briefly review the continuous motions of discrete curves resulting in the potential mKdV equations as considered in \cites{kaji_linkage_2019}, adapted for the case of discrete plane curves, and with added minor generalization (see also \cites{doliwa_elementary_1994, doliwa_integrable_1995, hisakado_motion_1995, matsuura_discrete_2012, inoguchi_explicit_2012}).
Let $x : \Sigma \rightarrow \mathbb{C}$ be a planar curve which is regular, that is, $\det{(x_n - x_{n-1} \ \  x_{n+1} - x_n)} \neq 0$ on any three consecutive vertices.
The unit tangent vector $T_n$ and the unit normal vector $N_n$ at $x_n$ can be defined by
	\[
		T_n := \frac{x_{n+1} - x_n}{a_n}, \quad N_n :=R(\tfrac{\pi}{2}) T_n,
	\]
where $a_n:=| x_{n+1}-x_n |$, and $R(\theta)$ denotes counterclockwise rotation about the origin by $\theta$.
Then a notion of \emph{discrete curvature $\kappa_n$} at $x_n$ can be defined via the Frenet-type equation (see, for example, \cite{matsuura_discrete_2012-1})
	\begin{equation}\label{eqn:Frenet}
		F_{n+1} = F_n R(\kappa_{n+1}) =: F_n L_n,
	\end{equation}
where $F_n$ is the frame $F_n=(T_n \ N_n) \in \mathrm{SO(2)}$.
Defining $\psi_n$ as the argument of $T_n$, $\kappa$ and $\psi$ are related by 
	\begin{equation}\label{eqn:psi1}
		\psi_{n+1}=\psi_n+\kappa_{n+1}.
	\end{equation}
	
Now consider a continuous motion $x_n(s):\Sigma \times I \rightarrow \mathbb{C}$ of a discrete curve $x_n(0) : \Sigma \to \mathbb{C}$ with constant speed of motion, that is, $| x_n' |$ is constant for all $(n, s) \in \Sigma \times I$.
By an appropriate constant scaling of the smooth parameter $s$, we may assume without loss of generality that $| x_n' | \equiv 1$, allowing for the definition of $w_n$ via 
	\[
		x_n'=\cos{w_n} T_n+\sin{w_n} N_n = F_n \begin{pmatrix} \cos w_n \\ \sin w_n \end{pmatrix}.
	\]
Since we have that $a_n^2 = |x_{n+1} - x_n|^2 = (x_{n+1} - x_n) \cdot (x_{n+1} - x_n)$,
	\begin{align*}
		a_n' &= (x_{n+1} - x_n)' \cdot T_n = \left(F_{n+1}\begin{pmatrix} \cos w_{n+1} \\ \sin w_{n+1} \end{pmatrix} - F_{n}\begin{pmatrix} \cos w_n \\ \sin w_n \end{pmatrix} \right)\cdot T_n \\
			&= F_n \left(L_n \begin{pmatrix} \cos w_{n+1} \\ \sin w_{n+1} \end{pmatrix} - \begin{pmatrix} \cos w_n \\ \sin w_n \end{pmatrix}\right) \cdot F_n \begin{pmatrix} 1 \\ 0 \end{pmatrix} \\
			&= \cos{(w_{n+1}+\kappa_{n+1})}-\cos{w_n}
	\end{align*}
where we used the Frenet equation \eqref{eqn:Frenet}.
Thus, excluding trivial deformations, we have that the motion is \emph{isoperimetric}, i.e.\ $a_n' \equiv 0$, if and only if
	\begin{equation}\label{8}
		w_{n+1}+\kappa_{n+1}= - w_n.
	\end{equation}
\begin{remark}
	The continuous motion considered in \cite{kaji_linkage_2019} assumed that $a_n(0)$ is constant for all $n \in \Sigma$, and defined the motion via
		\[
			x_n'=\frac{a_n}{\rho}(\cos{w_n} T_n+\sin{w_n} N_n)
		\]
	for some constant $\rho > 0$, so that $|x_n'| \equiv a_n/ \rho$ is constant.
	On contrast, we do not a priori require that $a_n(0)$ be constant for all $n \in \Sigma$, but only that $|x_n'|$ is constant.
\end{remark}
Hence, assuming that the continuous motion is isoperimetric, we have that $F'_n$ satisfies
	\begin{equation}\label{7}
		F'_n=F_n M_n, \
		M_n:=\frac{1}{a_n}
			\begin{pmatrix}
				0 & 2\sin{w_n} \\
				-2\sin{w_n} & 0
			\end{pmatrix}.
	\end{equation} 

Defining a potential function $\theta_n$ via the relations
	\begin{equation}\label{eqn:theta}
		\psi_n=\frac{\theta_{n+1}+\theta_n}{2}, \: w_n=-\frac{\theta_{n+1}-\theta_n}{2}, \:\text{and } \kappa_n = \frac{\theta_{n+1} - \theta_{n-1}}{2},
	\end{equation}
one can calculate using the compatibility condition
	\[
		L_n' = L_n M_{n+1} - M_n L_n
	\]
that
	\begin{equation}\label{eqn:mKdVrel}
		\begin{multlined}
			\left(\frac{\theta_{n+2}+\theta_{n+1}}{2}\right)' - \left(\frac{\theta_{n+1}+\theta_n}{2}\right)' \\
				\qquad = \frac{2}{a_{n+1}}\sin{\left(\frac{\theta_{n+2}-\theta_{n+1}}{2}\right)} - \frac{2}{a_n}\sin{\left(\frac{\theta_{n+1}-\theta_{n}}{2}\right)}.
		\end{multlined}
	\end{equation}
Therefore, we can infer that $\theta_n$ satisfies the following semi-discrete potential mKdV equation  (cf.\ \citelist{\cite{wadati_backlund_1974}*{Equation (7)} \cite{kaji_linkage_2019}*{Equation (4.21)}}):
	\begin{equation}\label{mKdV}
		\left(\frac{\theta_{n+1}+\theta_n}{2}\right)' = \frac{2}{a_n}\sin{\left(\frac{\theta_{n+1}-\theta_n}{2}\right)}.
	\end{equation}

\subsection{Geometric interpretation of the semi-discrete potential mKdV equation}
The relational definition of $\theta_n$ as in \eqref{eqn:theta} results in a difference equation for the semi-discrete potential mKdV equation as in \eqref{eqn:mKdVrel}.
In this section, we offer an alternate geometric method to define $\theta_n$ precisely at every vertex using the data of the continuous motion, and obtain the semi-discrete potential mKdV equation \eqref{mKdV} without the need for a difference equation.

To do this, consider the smooth curve frame $\tilde{F}_n(s)$ defined by
	\[
		\tilde{F}_n(s):=\left(\tilde{T}_n(s) \ \tilde{N}_n(s) \right), \quad\text{where }\tilde{T}_n(s):=x'_n(s) \text{ and }\tilde{N}_n(s):=R(\tfrac{\pi}{2})\tilde{T}_n(s).
	\]
Defining $\theta_n(s)$ as the \emph{tangential angle} of the smooth curve $x_n(s)$, i.e.\ $\theta_n(s)$ is the argument of $\tilde{T}_n(s)$, so that $\theta_n'(s)$ gives the smooth curvature, we have	
	\begin{equation}\label{eqn:theta1}
		\theta_n=\psi_n+w_n.
	\end{equation}
Together with \eqref{eqn:psi1} and \eqref{8}, one can verify
	\[
		\theta_{n+1}+\theta_n = \psi_{n+1}+\psi_n-\kappa_{n+1} \quad\text{and}\quad
		\theta_{n+1}-\theta_n = \kappa_{n+1}+w_{n+1}-w_n,
	\]
so that
	\begin{equation}\label{eqn:theta2}
		\theta_{n+1}=\psi_n-w_n.
	\end{equation}
Then it is straightforward to check using \eqref{eqn:theta1} and \eqref{eqn:theta2} that such $\theta_n$ satisfy all the relations in \eqref{eqn:theta}.

With such geometric definition of $\theta_n$, we now introduce an alternate method to directly obtain the semi-discrete potential mKdV equation.
By the definition of $w_n$, we have that $\tilde{F}_n=F_n R(w_n)$ so that by \eqref{7}, $\tilde{F}'_n$ satisfies
	\begin{equation}  \label{9}
		\tilde{F}'_n = F'_nR(w_n)+F_nR(w_n)' = F_n \left(M_n R(w_n)+R(w_n)' \right).
	\end{equation}  
On the other hand, by the well-known Frenet formula for plane curves, $\tilde{F}'_n$ can also be written as
	\begin{equation}\label{10}
		\tilde{F}'_n = F_n R(w_n)
					\begin{pmatrix}
						0 & -\theta'_n \\
						\theta'_n & 0  
					\end{pmatrix}.
	\end{equation}
Hence, from \eqref{9} and \eqref{10}, we have
	\[
		\psi'_n=(\theta_n-w_n)'=-\tfrac{2}{a_n}\sin{w_n},
	\]
allowing us to conclude that $\theta_n$ defined as the tangential angle satisfies the semi-discrete potential mKdV equation:
\begin{theorem}
	Let $x_n(s) : \Sigma \times I \to \mathbb{R}^2 \cong \mathbb{C}$ be a continuous motion of discrete curves as considered in  \cites{kaji_linkage_2019, inoguchi_explicit_2012}.
	The tangential angle $\theta_n(s)$ of the smooth curves in the continuous motion then satisfies the semi-discrete potential mKdV equation
	\begin{equation}\label{eqn:mKdV}
		\left(\frac{\theta_{n+1}+\theta_n}{2}\right)' = \frac{2}{a_n}\sin{\left(\frac{\theta_{n+1}-\theta_n}{2}\right)}.
	\end{equation}
\end{theorem}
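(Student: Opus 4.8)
The plan is to take $\theta_n(s)$ to be the \emph{tangential angle} of the smooth curve $s\mapsto x_n(s)$, i.e.\ the argument of $\tilde T_n(s)=x_n'(s)$, as in the discussion preceding the statement, and then to combine two facts: first, that this $\theta_n$ satisfies the relations in \eqref{eqn:theta}, in particular $\psi_n=\tfrac{\theta_{n+1}+\theta_n}{2}$ and $w_n=-\tfrac{\theta_{n+1}-\theta_n}{2}$; and second, that for an isoperimetric motion normalized so that $|x_n'|\equiv1$ one has $\psi_n'=-\tfrac{2}{a_n}\sin w_n$. Granting these, substituting into the latter identity yields $\bigl(\tfrac{\theta_{n+1}+\theta_n}{2}\bigr)'=\tfrac{2}{a_n}\sin\bigl(\tfrac{\theta_{n+1}-\theta_n}{2}\bigr)$, which is \eqref{eqn:mKdV}. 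Note that because $|x_n'|\equiv1$, the argument $\theta_n$ of $\tilde T_n$ is a bona fide tangential angle whose $s$-derivative is the signed curvature of the smooth curve, so there is no analytic subtlety.

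For the angle relations, I would begin from $\tilde T_n=\cos w_n\,T_n+\sin w_n\,N_n$, which exhibits $\tilde T_n$ as $T_n$ rotated by $w_n$ (recall $N_n=R(\tfrac{\pi}{2})T_n$); comparing arguments gives $\theta_n=\psi_n+w_n$, i.e.\ \eqref{eqn:theta1}. Substituting the discrete Frenet relation \eqref{eqn:psi1}, $\psi_{n+1}=\psi_n+\kappa_{n+1}$, and the isoperimetric condition \eqref{8}, $w_{n+1}+\kappa_{n+1}=-w_n$, one computes $\theta_{n+1}=\psi_{n+1}+w_{n+1}=\psi_n+\kappa_{n+1}+w_{n+1}=\psi_n-w_n$, which is \eqref{eqn:theta2}. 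Adding \eqref{eqn:theta1} and \eqref{eqn:theta2} gives $\theta_{n+1}+\theta_n=2\psi_n$, subtracting them gives $\theta_{n+1}-\theta_n=-2w_n$, and the remaining relation $\kappa_n=\tfrac{\theta_{n+1}-\theta_{n-1}}{2}$ then follows from \eqref{eqn:psi1}. The problem is thereby reduced to establishing $\psi_n'=-\tfrac{2}{a_n}\sin w_n$.

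The heart of the proof is to differentiate the smooth Frenet frame $\tilde F_n=(\tilde T_n\ \tilde N_n)=F_nR(w_n)$ in two ways. Using the frame evolution \eqref{7}, $F_n'=F_nM_n$ with $M_n=\tfrac{1}{a_n}\left(\begin{smallmatrix}0&2\sin w_n\\-2\sin w_n&0\end{smallmatrix}\right)$, the product rule gives $\tilde F_n'=F_n\bigl(M_nR(w_n)+R(w_n)'\bigr)$ as in \eqref{9}; since $M_n\in\mathfrak{so}(2)$ and conjugation by the rotation $R(w_n)$ fixes $\mathfrak{so}(2)$ pointwise ($\mathrm{SO}(2)$ being abelian), we have $M_nR(w_n)=R(w_n)M_n$, and with $R(w_n)'=R(w_n)\left(\begin{smallmatrix}0&-w_n'\\w_n'&0\end{smallmatrix}\right)$ this becomes $\tilde F_n'=F_nR(w_n)\bigl(M_n+\left(\begin{smallmatrix}0&-w_n'\\w_n'&0\end{smallmatrix}\right)\bigr)$. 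On the other hand, the smooth plane-curve Frenet formula gives $\tilde F_n'=F_nR(w_n)\left(\begin{smallmatrix}0&-\theta_n'\\\theta_n'&0\end{smallmatrix}\right)$ as in \eqref{10}. Equating the two $\mathfrak{so}(2)$-factors and reading off the $(2,1)$-entry yields $w_n'-\tfrac{2\sin w_n}{a_n}=\theta_n'$, that is, $\psi_n'=(\theta_n-w_n)'=-\tfrac{2}{a_n}\sin w_n$, and substituting $\psi_n=\tfrac{\theta_{n+1}+\theta_n}{2}$, $w_n=-\tfrac{\theta_{n+1}-\theta_n}{2}$ completes the proof. I expect the only point requiring genuine care to be this matrix identity: one must track the sign conventions fixed by $N_n=R(\tfrac{\pi}{2})T_n$, $\tilde N_n=R(\tfrac{\pi}{2})\tilde T_n$ and by the orientation of $w_n$, so that the skew matrices match with the right signs; the rest is a routine unwinding of the definitions.
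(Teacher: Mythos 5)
Your proposal is correct and follows essentially the same route as the paper: define $\theta_n$ as the tangential angle, derive $\theta_n=\psi_n+w_n$ and $\theta_{n+1}=\psi_n-w_n$ from \eqref{eqn:psi1} and \eqref{8}, and compare the two expressions \eqref{9} and \eqref{10} for $\tilde F_n'$ to get $\psi_n'=-\tfrac{2}{a_n}\sin w_n$, from which \eqref{eqn:mKdV} follows by substitution. Your explicit use of the commutativity of $\mathrm{SO}(2)$ to move $R(w_n)$ past $M_n$ just fills in a step the paper leaves implicit, and your sign bookkeeping matches the paper's conventions.
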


In fact, \eqref{eqn:mKdV} tells us more:
\begin{proposition}\label{prop:isoDarboux}
	Let $x_n(s) : \Sigma \times I \to \mathbb{R}^2 \cong \mathbb{C}$ be a continuous motion of a discrete curve $x^0_n = x_n(s_0)$ for some $s_0 \in I$ resulting in the semi-discrete potential mKdV equation (as considered in \cites{kaji_linkage_2019, inoguchi_explicit_2012}).
	Giving $x^0$ the arc-length polarization so that $\mu_{(n, n+1)} = \frac{1}{a_n^2}$, $x_n(s)$ is an infinitesimal Darboux transformation keeping the arc-length polarization.
\end{proposition}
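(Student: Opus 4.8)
The plan is to verify Definition~\ref{def:dFlow} directly for the motion $x_n(s)$ with the constant parameter function $m \equiv 1$, and then to invoke Proposition~\ref{prop:discArc} for the arc-length statement. The entire argument amounts to translating the Frenet-type data assembled in Section~\ref{sect:isoperimetric} into complex notation and computing the tangential cross ratio on each edge $(n,n+1)$.

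First I would record the complex forms of the relevant quantities. Since $N_n = R(\tfrac{\pi}{2})T_n$ corresponds to multiplication by $i$, we may write $T_n = e^{i\psi_n}$, and hence, from the definition of $w_n$,
\[
	x_n' = \cos w_n\, T_n + \sin w_n\, N_n = e^{i w_n} T_n = e^{i(\psi_n + w_n)} = e^{i\theta_n}
\]
using \eqref{eqn:theta1}; this is consistent with $|x_n'| \equiv 1$ and with $\theta_n$ being the tangential angle. Likewise $x_{n+1} - x_n = a_n T_n = a_n e^{i\psi_n}$, so $(x_n - x_{n+1})^2 = a_n^2 e^{2i\psi_n}$.

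The key step is then the cross-ratio computation. Using \eqref{eqn:theta1} and \eqref{eqn:theta2} one has $\theta_n + \theta_{n+1} = (\psi_n + w_n) + (\psi_n - w_n) = 2\psi_n$, so that on every edge
\[
	\frac{x_n' \, x_{n+1}'}{(x_n - x_{n+1})^2} = \frac{e^{i(\theta_n + \theta_{n+1})}}{a_n^2\, e^{2i\psi_n}} = \frac{1}{a_n^2} = \mu_{(n,n+1)} .
\]
In particular the left-hand side is real, depends only on the edge $(n,n+1)$, and equals $\mu_{(n,n+1)}/m$ with $m \equiv 1$; hence \eqref{eqn:dFlow} holds and $x_n(s)$ is an infinitesimal Darboux transformation of $x^0$ with parameter function $m \equiv 1$. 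Finally, for the arc-length claim, note that with $m \equiv 1$ the constant-unit-speed hypothesis $|x_n'| \equiv 1$ says precisely that $\frac{\dif{s}^2}{m} = \dif{s}^2 = |x_{n_0}'|^2 \dif{s}^2 = |{\dif{x_{n_0}}}|^2$ for every (hence some) $n_0 \in \Sigma$, i.e.\ $\frac{\dif{s}^2}{m}$ is the smooth arc-length polarization of the initial condition curve. Proposition~\ref{prop:discArc} then yields that $\mu$ is the discrete arc-length polarization of $x_n(s)$ for every $s \in I$, which is the assertion.

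I do not expect a genuine obstacle here; the only point that needs care is the bookkeeping of orientation and sign conventions --- the identification $N_n \leftrightarrow i\,T_n$, together with the relations $w_n = -\tfrac12(\theta_{n+1}-\theta_n)$ and $\theta_{n+1} = \psi_n - w_n$ coming from \eqref{eqn:theta}--\eqref{eqn:theta2} --- so that the phases in the numerator and denominator of the tangential cross ratio cancel exactly and leave the real, edge-dependent quantity $1/a_n^2$. Once those are in place, the proposition follows from the displayed computation together with Proposition~\ref{prop:discArc}.
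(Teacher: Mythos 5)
Your proposal is correct and follows essentially the same route as the paper: the paper's proof is exactly the computation $\frac{x_n' x_{n+1}'}{(x_n-x_{n+1})^2} = \frac{1}{a_n^2} e^{i(\theta_n+\theta_{n+1}-2\psi_n)} = \frac{1}{a_n^2} = \frac{\mu_{(n,n+1)}}{m}$ with $m\equiv 1$, followed by the observation that $s$ is an arc-length parameter (Remark~\ref{rem:arclength}) and an appeal to Proposition~\ref{prop:discArc}. Your extra bookkeeping (writing $T_n = e^{i\psi_n}$, $x_n' = e^{i\theta_n}$, and noting $\theta_n + \theta_{n+1} = 2\psi_n$ from \eqref{eqn:theta1}--\eqref{eqn:theta2}) simply spells out why the exponential factor in the paper's display equals $1$.
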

\begin{proof}
	Note that on any edge $(n, n+1)$,
		\[
			\frac{x'_n x'_{n+1}}{(x_n - x_{n+1})^2} = \frac{1}{a_n^2}e^{\sqrt{-1} (\theta_n + \theta_{n+1} - 2 \psi_n)} = \frac{1}{a_n^2} = \frac{\mu_{(n, n+1)}}{m},
		\]
	where $m \equiv 1$.
	Since $s$ is the arc-length parameter of $x_n(s)$ for any $n \in \Sigma$, Remark~\ref{rem:arclength} implies that $\frac{\dif s^2}{m}$ is the (smooth) arc-length polarization of $x_n$, and hence Proposition~\ref{prop:discArc} gives us the desired conclusion.
\end{proof}

Now assume that $x^0 : (\Sigma, \frac{1}{\mu}) \to \mathbb{C}$ is a discrete polarized curve with arc-length polarization $\mu$, and let $x$ be an infinitesimal Darboux transformation of $x^0 = x(0)$ with parameter function $m$, keeping the discrete arc-length polarization so that
	\[
		\frac{1}{\mu_{(n,n+1)}} = |x_n(s) - x_{n+1}(s)|^2
	\]
for any $s \in I$ giving us the isoperimetric condition.
Then Proposition \ref{prop:discArc} tells us that $\frac{\dif s^2}{m}$ is the (smooth) arc-length polarization of the curve $x_{n_0}(s)$ for any $n_0 \in \Sigma$.
Reparametrizing so that $m \equiv 1$, then $s$ is the arc-length parametrization for $x_{n_0}(s)$ for any $n_0 \in \Sigma$, telling us that we obtain the continuous motion of discrete curves resulting in semi-discrete potential mKdV equation as considered in \cites{kaji_linkage_2019, inoguchi_explicit_2012}.

Summarizing, we have:
\begin{theorem}\label{thm:semidiscrete}
	The following transformations or motions describe the same semi-discrete system:
		\begin{itemize}
			\item (smooth) arc-length polarization preserving (iterated) Darboux transformations of a polarized curve,
			\item (discrete) arc-length polarization preserving infinitesimal Darboux transformation of a discrete polarized curve, and
			\item certain isoperimetric motion of a discrete curve resulting in semi-discrete potential mKdV equation.
		\end{itemize}
\end{theorem}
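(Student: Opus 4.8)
The plan is to recognize this theorem as an assembly of the results already obtained, and in particular to exhibit all three items as different readings of one and the same two-parameter array $x : \Sigma \times I \to \mathbb{C}$; once the dictionary between them is fixed, the proof is essentially bookkeeping.

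First I would recall, following \cite{burstall_semi-discrete_2016} adapted to the planar setting of Section~\ref{sect:Darboux}, that a map $x : \Sigma \times I \to \mathbb{C}$ is an iterated Darboux transformation of a smooth polarized curve, with edge parameters $\mu$ and common polarization $\tfrac{\dif{s}^2}{m}$, exactly when each slice $x_n := x(n,\cdot)$ is a smooth curve polarized by $\tfrac{\dif{s}^2}{m}$ and each consecutive pair $(x_n, x_{n+1})$ is a Darboux pair with parameter $\mu_{(n,n+1)}$ --- here $I$ is the curve parameter and $\Sigma$ counts the iterations. But the cross-ratio condition \eqref{cr} defining such a Darboux pair is \emph{verbatim} the defining equation \eqref{eqn:dFlow} of an infinitesimal Darboux transformation, with parameter function $m$, of the discrete polarized curve $x(\cdot, s_0) : (\Sigma, \tfrac1\mu) \to \mathbb{C}$, once one reinterprets $\Sigma$ as the curve parameter, $I$ as the deformation time, and $x_{n_0}$ as the initial-condition curve. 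Thus items (1) and (2) impose the same equations on the same array; the only difference is which boundary data one generates the array from --- a smooth curve together with a sequence of parameters $\mu_{(n,n+1)}$ and initial points, versus a discrete curve together with $m$ and the single smooth curve $x_{n_0}$ --- and existence and uniqueness for the underlying Riccati equation show these data sets determine one another.

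Next I would match the arc-length conditions. Along one Darboux step, Proposition~\ref{prop:arclength} says the arc-length polarization is preserved precisely when $|x_n - x_{n+1}|^2 = \tfrac1{\mu_{(n,n+1)}}$ at one point, and \eqref{eqn:lambda} then forces this at every $s$; so, iterating, the hypothesis of item (1) is equivalent to $\tfrac1{\mu_{(n,n+1)}} = |x_n(s) - x_{n+1}(s)|^2$ on every edge for every $s$, i.e.\ to $\mu$ being the discrete arc-length polarization of every discrete slice $x(\cdot, s)$, which is the hypothesis of item (2); the reverse implication is exactly Proposition~\ref{prop:discArc}. For the equivalence with item (3) I would invoke Proposition~\ref{prop:isoDarboux} and the discussion following it: a discrete-arc-length-preserving infinitesimal Darboux transformation, reparametrized so that $m \equiv 1$, has $|x_n'| \equiv 1$ by Remark~\ref{rem:arclength} and Proposition~\ref{prop:discArc} and constant edge lengths $a_n$, hence is the constant-speed isoperimetric motion of \cites{kaji_linkage_2019, inoguchi_explicit_2012}, whose tangential angle satisfies \eqref{eqn:mKdV}; conversely, Proposition~\ref{prop:isoDarboux} shows that such a motion, given the arc-length polarization $\mu_{(n,n+1)} = \tfrac1{a_n^2}$, is an infinitesimal Darboux transformation with $m \equiv 1$ preserving the discrete arc-length polarization. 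Chaining the two identifications proves the theorem.

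The genuinely delicate point is the first step: making precise what ``the same semi-discrete system'' should mean, and checking that the correspondence between the three kinds of generating data is a bijection --- including the smoothness in $s$ of the slices $x_n$ in the reinterpreted reading and the matching of the edge function $\mu$ with the spectral parameters of the successive Darboux transformations. After that, Propositions~\ref{prop:arclength}, \ref{prop:discArc} and \ref{prop:isoDarboux}, together with \eqref{eqn:mKdV}, supply everything needed.
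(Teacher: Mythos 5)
Your proposal is correct and follows essentially the same route as the paper: the theorem is proved there by ``summarizing'' the preceding results, namely the role-switch between the smooth and discrete parameters linking items (1) and (2) via Propositions~\ref{prop:arclength} and~\ref{prop:discArc}, and Proposition~\ref{prop:isoDarboux} together with the reparametrization $m \equiv 1$ argument linking items (2) and (3). Your extra care about what ``the same semi-discrete system'' means and about the bijection of generating data is a reasonable elaboration of the same bookkeeping, not a different method.
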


\subsection{The semi-discrete potential mKdV equation via infinitesimal Darboux transformation}
Theorem \ref{thm:semidiscrete} allows us to introduce an efficient route to obtaining the semi-discrete potential mKdV equation without any use of frames via infinitesimal Darboux transformation keeping arc-length polarization.
Let $x : \Sigma \times I \to \mathbb{C}$ be an infinitesimal Darboux transformation keeping the discrete arc-length polarization of a discrete curve $x^0 : (\Sigma,\frac{1}{\mu}) \to \mathbb{C}$.
Then, we can see that on any edge $(n, n+1)$, $x_n$ and $x_{n+1}$ are an arc-length polarized Darboux pair with parameter $\mu_{(n, n+1)}$.
Assume without loss of generality that $s \in I$ is the simultaneous arc-length parameter of $x_n(s)$ for all $n \in \Sigma$ so that $m \equiv 1$.

For any $n \in \Sigma$, define $k_n$ to be the curvature of $x_n$ so that $x_n''=i k_n x_n'$, and let $\theta_n$ be the tangential angle of $x_n$, i.e.\ $x_n' = e^{i\theta_n}$, so that $\theta'_n = k_n$.
Now on any edge $(n, n+1)$, we compute $(x'_n x'_{n+1})'$ in two ways.
Writing $\mu = \mu_{(n, n+1)}$ for simplicity, first, by the tangential cross-ratio condition,
	\[
		(x_n' x_{n+1}')' = \left( \mu (x_{n+1}-x_n)^2 \right)' 
                  	= 2\sqrt{\mu} (e^{i \theta_{n+1}}-e^{i \theta_n})
                                      e^{i \tfrac{\theta_n\vphantom{\theta_{n+1}}}{2}} e^{i \tfrac{\theta_{n+1}}{2}}.
\]
On the other hand,
	\[
		(x_n' x_{n+1}')'
			= i \theta_n' x_n' x_{n+1}'+i \theta_{n+1}' x_n' x_{n+1}'
			= i(\theta_{n+1}'+\theta_n')e^{i\theta_n} e^{i\theta_{n+1}}.
	\]
Hence, using the fact that $|x_{n+1} - x_n |^2 = \tfrac{1}{\mu}$, we obtain the semi-discrete potential mKdV equation \eqref{mKdV}:
	\begin{align*}
		\left(\frac{\theta_{n+1}+\theta_n}{2}\right)' & = -\sqrt{\mu} \, i \, (e^{i\theta_{n+1}}-e^{i\theta_n})
                                          e^{-i\tfrac{\theta_n\vphantom{\theta_{n+1}}}{2}} e^{-i\tfrac{\theta_{n+1}}{2}} \nonumber \\
                                & = \frac{2}{| x_{n+1}-x_n |} \sin{\left(
                                                                              \frac{\theta_{n+1}-\theta_n}{2}
                                                                             \right)}.
	\end{align*}

\vspace{15pt}
\textbf{Acknowledgements.}
The authors would like to thank Professor Francis Burstall, Professor Udo Hertrich-Jeromin and the referees for their valuable comments, and gratefully acknowledges the support from JSPS/FWF Bilateral Joint Project I3809-N32 ``Geometric shape generation" and JSPS Grant-in-Aids for: JSPS Fellows 19J10679, Scientific Research (C) 15K04845, (C) 20K03585 and (S) 17H06127 (P.I.: M.-H.\ Saito).

\begin{bibdiv}
\begin{biblist}

\bib{backlund_om_1883}{article}{
      author={B{\"a}cklund, Albert~Victor},
       title={Om ytor med konstant negativ kr\"okning},
        date={1883},
     journal={Lunds Universitets \AA rsskrift},
      volume={19},
       pages={1\ndash 48},
}

\bib{bernstein_non-special_2001}{article}{
      author={Bernstein, Holly},
       title={Non-special, non-canal isothermic tori with spherical lines of
  curvature},
        date={2001},
        ISSN={0002-9947},
     journal={Trans. Amer. Math. Soc.},
      volume={353},
      number={6},
       pages={2245\ndash 2274},
      review={\MR{1814069}},
      doi={10.1090/S0002-9947-00-02691-X},
}

\bib{bianchi_sulla_1892}{article}{
      author={Bianchi, Luigi},
       title={Sulla trasformazione di {{B\"acklund}} per le superficie
  pseudosferiche},
        date={1892},
     journal={Rend. Lincei},
      volume={5},
      number={1},
       pages={3\ndash 12},
}

\bib{bianchi_ricerche_1904}{article}{
      author={Bianchi, Luigi},
       title={Ricerche sulle superficie isoterme e sulla deformazione delle
  quadriche},
        date={1904},
     journal={Ann. Mat. Pura Appl. (3)},
      volume={11},
      number={1},
       pages={93\ndash 157},
       doi = {10.1007/BF02419963},
}

\bib{bianchi_complementi_1905}{article}{
      author={Bianchi, Luigi},
       title={Complementi alle ricerche sulle superficie isoterme},
        date={1905},
     journal={Ann. Mat. Pura Appl. (3)},
      volume={12},
      number={1},
       pages={19\ndash 54},
  doi = {10.1007/BF02419495},
}

\bib{bobenko_painleve_2000}{book}{
      author={Bobenko, Alexander~I.},
      author={Eitner, Ulrich},
       title={Painlev\'e equations in the differential geometry of surfaces},
      series={Lecture {{Notes}} in {{Math}}.},
   publisher={{Springer-Verlag}},
     address={{Berlin}},
        date={2000},
      volume={1753},
        ISBN={978-3-540-41414-8},
      review={\MR{1806600}},
      doi={10.1007/b76883},
}

\bib{bobenko_discrete_1996-1}{article}{
      author={Bobenko, Alexander~I.},
      author={Pinkall, Ulrich},
       title={Discrete isothermic surfaces},
        date={1996},
        ISSN={0075-4102},
     journal={J. Reine Angew. Math.},
      volume={475},
       pages={187\ndash 208},
      review={\MR{1396732}},
        doi = {10.1515/crll.1996.475.187},
}

\bib{bobenko_discrete_1996}{article}{
      author={Bobenko, Alexander~I.},
      author={Pinkall, Ulrich},
       title={Discrete surfaces with constant negative {{Gaussian}} curvature
  and the {{Hirota}} equation},
        date={1996},
        ISSN={0022-040X},
     journal={J. Differential Geom.},
      volume={43},
      number={3},
       pages={527\ndash 611},
      review={\MR{1412677}},
            doi = {10.4310/jdg/1214458324},
}

\bib{bobenko_discrete_2008}{book}{
      author={Bobenko, Alexander~I.},
      author={Suris, Yuri~B.},
       title={Discrete differential geometry},
      series={Graduate {{Studies}} in {{Mathematics}}},
   publisher={{American Mathematical Society}},
     address={{Providence, RI}},
        date={2008},
      number={98},
        ISBN={978-0-8218-4700-8},
      review={\MR{2467378}},
}

\bib{bor_tire_2020}{article}{
      author={Bor, Gil},
      author={Levi, Mark},
      author={Perline, Ron},
      author={Tabachnikov, Sergei},
       title={Tire tracks and integrable curve evolution},
        date={2020},
        ISSN={1073-7928},
     journal={Int. Math. Res. Not. IMRN},
      number={9},
       pages={2698\ndash 2768},
      review={\MR{4095423}},
        doi = {10.1093/imrn/rny087},
}

\bib{bour_theorie_1862}{article}{
      author={Bour, Edmond},
       title={Th\'eorie de la d\'eformation des surfaces},
        date={1862},
     journal={J. \'Ec. Polytech.},
      volume={39},
       pages={1\ndash 148},
}

\bib{burstall_isothermic_2006}{article}{
      author={Burstall, F.~E.},
       title={Isothermic surfaces: conformal geometry, {C}lifford algebras and
  integrable systems},
date={2006},
       pages={1\ndash 82},
       review={\MR{2222512}},
        doi={10.1090/amsip/036/01},
        book={                  
          title={Integrable systems, geometry, and topology},
      editor={Terng, Chuu-Lian},
      series={AMS/IP Stud. Adv. Math.},
      volume={36},
   publisher={Amer. Math. Soc.},
     address={Providence, RI},},
}

\bib{burstall_discrete_2020}{article}{
      author={Burstall, Francis~E.},
      author={Cho, Joseph},
      author={Hertrich-Jeromin, Udo},
      author={Pember, Mason},
      author={Rossman, Wayne},
       title={Discrete $\Omega$-nets and Guichard-nets},
        date={2020},
      eprint={2008.01447},
      url = {http://arxiv.org/abs/2008.01447},
}

\bib{burstall_isothermic_2011}{article}{
      author={Burstall, Francis~E.},
      author={Donaldson, Neil~M.},
      author={Pedit, Franz},
      author={Pinkall, Ulrich},
       title={Isothermic submanifolds of symmetric $R$-spaces},
        date={2011},
     journal={J. Reine Angew. Math.},
      volume={660},
       pages={191–243},
      review={\MR{2855825}},
  doi = {10.1515/crelle.2011.075}, 
}

\bib{burstall_semi-discrete_2016}{article}{
      author={Burstall, Francis~E.},
      author={{Hertrich-Jeromin}, Udo},
      author={M{\"u}ller, Christian},
      author={Rossman, Wayne},
       title={Semi-discrete isothermic surfaces},
        date={2016},
     journal={Geom. Dedicata},
      volume={183},
       pages={43\ndash 58},
      review={\MR{3523116}},
        doi = {10.1007/s10711-016-0143-7},
}

\bib{burstall_curved_1997}{article}{
      author={Burstall, Francis~E.},
      author={{Hertrich-Jeromin}, Udo},
      author={Pedit, Franz},
      author={Pinkall, Ulrich},
       title={Curved flats and isothermic surfaces},
        date={1997},
     journal={Math. Z.},
      volume={225},
      number={2},
       pages={199\ndash 209},
      review={\MR{1464926}},
       doi = {10.1007/PL00004308}, 
}

\bib{calapso_sulla_1903}{article}{
      author={Calapso, Pasquale},
       title={Sulla superficie a linee di curvatura isoterme},
        date={1903},
     journal={Rend. Circ. Mat. Palermo},
      volume={17},
      number={2},
       pages={275\ndash 286},
         doi = {10.1007/BF03012748},
}

\bib{cieslinski_isothermic_1995}{article}{
      author={Cieśliński, Jan},
      author={Goldstein, Piotr},
      author={Sym, Antoni},
       title={Isothermic surfaces in $\mathbf {E}^3$ as soliton surfaces},
        date={1995},
     journal={Phys. Lett. A},
      volume={205},
      number={1},
       pages={37–43},
      review={\MR{1352426}},
  doi = {10.1016/0375-9601(95)00504-V},
}

\bib{darboux_sur_1899}{article}{
      author={Darboux, Gaston},
       title={Sur les surfaces isothermiques},
        date={1899},
     journal={Ann. Sci. \'Ec. Norm. Sup\'er. (3)},
      volume={16},
       pages={491\ndash 508},
  doi = {10.24033/asens.473},
}

\bib{doliwa_elementary_1994}{article}{
      author={Doliwa, A.},
      author={Santini, P.~M.},
       title={An elementary geometric characterization of the integrable
  motions of a curve},
        date={1994},
     journal={Phys. Lett. A},
      volume={185},
      number={4},
       pages={373\ndash 384},
      review={\MR{1261407}},
        doi = {10.1016/0375-9601(94)90170-8},
}

\bib{doliwa_integrable_1995}{article}{
      author={Doliwa, Adam},
      author={Santini, Paolo~Maria},
       title={Integrable dynamics of a discrete curve and the
  {{Ablowitz}}-{{Ladik}} hierarchy},
        date={1995},
     journal={J. Math. Phys.},
      volume={36},
      number={3},
       pages={1259\ndash 1273},
      review={\MR{1317439}},
        doi = {10.1063/1.531119},
}

\bib{doliwa_transformations_2000}{article}{
      author={Doliwa, Adam},
      author={Santini, Paolo~Maria},
      author={Ma{\~n}as, Manuel},
       title={Transformations of quadrilateral lattices},
        date={2000},
        ISSN={0022-2488},
     journal={J. Math. Phys.},
      volume={41},
      number={2},
       pages={944\ndash 990},
      review={\MR{1737004}},
      doi={10.1063/1.533175},
}

\bib{fuchs_transformations_2019}{article}{
      author={Fuchs, Andreas},
       title={Transformations and singularities of polarized curves},
        date={2019},
     journal={Ann. Global Anal. Geom.},
      volume={55},
      number={3},
       pages={529\ndash 553},
      review={\MR{3936232}},
  doi = {10.1007/s10455-018-9639-8},
}

\bib{goldstein_korteweg-vries_1991}{article}{
      author={Goldstein, Raymond~E.},
      author={Petrich, Dean~M.},
       title={The {{Korteweg}}-de {{Vries}} hierarchy as dynamics of closed
  curves in the plane},
        date={1991},
     journal={Phys. Rev. Lett.},
      volume={67},
      number={23},
       pages={3203\ndash 3206},
      review={\MR{1135964}},
  doi = {10.1103/PhysRevLett.67.3203},
}

\bib{hertrich-jeromin_mobius_2001}{article}{
      author={{Hertrich-Jeromin}, Udo},
      author={Musso, Emilio},
      author={Nicolodi, Lorenzo},
       title={M\"obius geometry of surfaces of constant mean curvature 1 in
  hyperbolic space},
        date={2001},
     journal={Ann. Global Anal. Geom.},
      volume={19},
      number={2},
       pages={185\ndash 205},
      review={\MR{1826401}},
        doi = {10.1023/A:1010738712475},
}

\bib{hertrich-jeromin_remarks_1997}{article}{
      author={{Hertrich-Jeromin}, Udo},
      author={Pedit, Franz},
       title={Remarks on the {{Darboux}} transform of isothermic surfaces},
        date={1997},
     journal={Doc. Math.},
      volume={2},
       pages={313\ndash 333},
      review={\MR{1487467}},
}

\bib{hirota_nonlinear_1977-1}{article}{
      author={Hirota, Ryogo},
       title={Nonlinear partial difference equations. {{I}}. {{A}} difference
  analogue of the {{Korteweg}}-de {{Vries}} equation},
    language={en},
        date={1977-10},
     journal={J. Phys. Soc. Jpn.},
      volume={43},
      number={4},
       pages={1424\ndash 1433},
             review={\MR{0460934}},
       doi = {10.1143/JPSJ.43.1424},             
}

\bib{hirota_nonlinear_1977}{article}{
      author={Hirota, Ryogo},
       title={Nonlinear partial difference equations. {{III}}. {{Discrete}}
  sine-{{Gordon}} equation},
        date={1977},
     journal={J. Phys. Soc. Japan},
      volume={43},
      number={6},
       pages={2079\ndash 2086},
         doi = {10.1143/JPSJ.43.2079},
       review={\MR{0460936}},
}

\bib{hirota_discretization_1998}{article}{
      author={Hirota, Ryogo},
       title={Discretization of the potential modified {{KdV}} equation},
        date={1998},
     journal={J. Phys. Soc. Japan},
      volume={67},
      number={7},
       pages={2234\ndash 2236},
      review={\MR{1647153}},
        doi = {10.1143/JPSJ.67.2234},
}

\bib{hisakado_motion_1995}{article}{
      author={Hisakado, Masato},
      author={Nakayama, Kazuaki},
      author={Wadati, Miki},
       title={Motion of discrete curves in the plane},
        date={1995},
     journal={J. Phys. Soc. Japan},
      volume={64},
      number={7},
       pages={2390\ndash 2393},
      review={\MR{1345549}},
        doi = {10.1143/JPSJ.64.2390},
}

\bib{hoffmann_discrete_2000}{thesis}{
      author={Hoffmann, Tim},
       title={Discrete curves and surfaces},
        type={Ph.{{D}}. {{Thesis}}},
        date={2000},
}

\bib{hoffmann_discrete_2008}{incollection}{
      author={Hoffmann, Tim},
       title={Discrete {{Hashimoto}} surfaces and a doubly discrete smoke-ring
  flow},
        date={2008},
      editor={Bobenko, Alexander~I.},
      editor={Schr{\"o}der, Peter},
      editor={Sullivan, John~M.},
      editor={Ziegler, G{\"u}nter~M.},
      series={Oberwolfach {{Semin}}.},
      volume={38},
   publisher={{Birkh\"auser}},
     address={{Basel}},
       pages={95\ndash 115},
      review={\MR{2405662}},
  doi = {10.1007/978-3-7643-8621-4_5},
          book={                  
   title={Discrete differential geometry},
      editor={Bobenko, Alexander~I.},
      editor={Schr{\"o}der, Peter},
      editor={Sullivan, John~M.},
      editor={Ziegler, G{\"u}nter~M.},
      series={Oberwolfach {{Semin}}.},
      volume={38},
   publisher={{Birkh\"auser}},
     address={{Basel}},},
}

\bib{hoffmann_discrete_2009}{book}{
      author={Hoffmann, Tim},
       title={Discrete differential geometry of curves and surfaces},
      series={{{COE Lecture Note}}},
   publisher={{Kyushu University, Faculty of Mathematics}},
     address={{Fukuoka}},
        date={2009},
      volume={18},
      review={\MR{2934178}},
}

\bib{hoffmann_discrete_2004}{article}{
      author={Hoffmann, Tim},
      author={Kutz, Nadja},
       title={Discrete curves in $\mathbb {C}\mathrm{P}^1$ and the {Toda}
  lattice},
        date={2004},
     journal={Stud. Appl. Math.},
      volume={113},
      number={1},
       pages={31–55},
      review={\MR{2061648}},
  doi = {10.1111/j.1467-9590.2004.01493.x},
}

\bib{inoguchi_explicit_2012}{article}{
      author={Inoguchi, Jun-ichi},
      author={Kajiwara, Kenji},
      author={Matsuura, Nozomu},
      author={Ohta, Yasuhiro},
       title={Explicit solutions to the semi-discrete modified {{KdV}} equation
  and motion of discrete plane curves},
        date={2012},
     journal={J. Phys. A},
      volume={45},
      number={4},
       pages={045206, 16},
      review={\MR{2874242}},
  doi = {10.1088/1751-8113/45/4/045206},
}

\bib{inoguchi_motion_2012}{article}{
      author={Inoguchi, Jun-ichi},
      author={Kajiwara, Kenji},
      author={Matsuura, Nozomu},
      author={Ohta, Yasuhiro},
       title={Motion and {{B\"acklund}} transformations of discrete plane
  curves},
        date={2012},
     journal={Kyushu J. Math.},
      volume={66},
      number={2},
       pages={303\ndash 324},
      review={\MR{3051339}},
  doi = {10.2206/kyushujm.66.303},
}

\bib{inoguchi_discrete_2014}{article}{
      author={Inoguchi, Jun-ichi},
      author={Kajiwara, Kenji},
      author={Matsuura, Nozomu},
      author={Ohta, Yasuhiro},
       title={Discrete {{mKdV}} and discrete sine-{{Gordon}} flows on discrete
  space curves},
        date={2014},
     journal={J. Phys. A},
      volume={47},
      number={23},
       pages={235202, 26},
      review={\MR{3216777}},
  doi = {10.1088/1751-8113/47/23/235202},
}

\bib{kaji_linkage_2019}{incollection}{
      author={Kaji, Shizuo},
      author={Kajiwara, Kenji},
      author={Park, Hyeongki},
       title={Linkage mechanisms governed by integrable deformations of
  discrete space curves},
        date={2019},
        book={
   title={Nonlinear {{Systems}} and {{Their Remarkable Mathematical
  Structures}}},
      editor={Euler, Norbert},
      editor={Nucci, Maria~Clara},
      volume={2},
   publisher={{Chapman and Hall/CRC}},
     address={{New York}},}
       pages={356\ndash 381},
         doi = {10.1201/9780429263743},
}

\bib{konopelchenko_elementary_1982}{article}{
      author={Konopelchenko, B.~G.},
       title={Elementary {{B\"acklund}} transformations, nonlinear
  superposition principle and solutions of the integrable equations},
        date={1982},
        ISSN={0375-9601},
     journal={Phys. Lett. A},
      volume={87},
      number={9},
       pages={445\ndash 448},
      review={\MR{645876}},
      doi={10.1016/0375-9601(82)90754-X},
}

\bib{lamb_solitons_1976}{article}{
      author={Lamb, G.~L., Jr.},
       title={Solitons and the motion of helical curves},
        date={1976},
     journal={Phys. Rev. Lett.},
      volume={37},
      number={5},
       pages={235\ndash 237},
      review={\MR{473584}},
        doi = {10.1103/PhysRevLett.37.235},
}

\bib{levi_nonlinear_1981}{article}{
      author={Levi, D.},
       title={Nonlinear differential-difference equations as {{B\"acklund}}
  transformations},
        date={1981},
        ISSN={0305-4470},
     journal={J. Phys. A},
      volume={14},
      number={5},
       pages={1083\ndash 1098},
      review={\MR{611974}},
      doi={10.1088/0305-4470/14/5/028},
}

\bib{levi_backlund_1980}{article}{
      author={Levi, D.},
      author={Benguria, R.},
       title={B\"acklund transformations and nonlinear differential difference
  equations},
        date={1980},
        ISSN={0027-8424},
     journal={Proc. Nat. Acad. Sci. U.S.A.},
      volume={77},
      number={9, part 1},
       pages={5025\ndash 5027},
      review={\MR{587276}},
      doi={10.1073/pnas.77.9.5025},
}

\bib{matsuura_discrete_2012}{incollection}{
      author={Matsuura, Nozomu},
       title={Discrete differential geometry of curves},
        date={2012},
        language={Japanese},
   book={
   title={Progress in mathematics of integrable systems},
      series={{{RIMS K\^oky\^uroku Bessatsu}}},
      editor={Hirota, Ryoto},
      editor={Daisuke, Takahashi},
   publisher={{Res. Inst. Math. Sci. (RIMS)}},
     address={{Kyoto}},}
       pages={53\ndash 75},
      review={\MR{2964435}},
}

\bib{matsuura_discrete_2012-1}{article}{
      author={Matsuura, Nozomu},
       title={Discrete {{KdV}} and discrete modified {{KdV}} equations arising
  from motions of planar discrete curves},
        date={2012},
     journal={Int. Math. Res. Not. IMRN},
      number={8},
       pages={1681\ndash 1698},
      review={\MR{2920827}},
        doi = {10.1093/imrn/rnr080},
}

\bib{muller_semi-discrete_2013}{article}{
      author={M{\"u}ller, Christian},
      author={Wallner, Johannes},
       title={Semi-discrete isothermic surfaces},
        date={2013},
     journal={Results Math.},
      volume={63},
      number={3-4},
       pages={1395\ndash 1407},
      review={\MR{3057376}},
        doi = {10.1007/s00025-012-0292-4},
}

\bib{muller_semi-discrete_2017}{incollection}{
      author={M{\"u}ller, Christian},
      author={Yasumoto, Masashi},
       title={Semi-discrete constant mean curvature surfaces of revolution in
  {{Minkowski}} space},
        date={2017},
   book={
     title={Geometry, integrability and quantization {{XVIII}}},
      editor={Mladenov, Iva{\"i}lo~M.},
      editor={Meng, Guowu},
      editor={Yoshioka, Akira},
   publisher={{Bulgar. Acad. Sci.}},
     address={{Sofia}},}
       pages={191\ndash 202},
      review={\MR{3616921}},
}

\bib{musso_bianchi-darboux_2000}{article}{
      author={Musso, Emilio},
      author={Nicolodi, Lorenzo},
       title={The {Bianchi}-{Darboux} transform of ${L}$-isothermic surfaces},
        date={2000},
        ISSN={0129-167X},
     journal={Internat. J. Math.},
      volume={11},
      number={7},
       pages={911–924},
      review={\MR{1792958}},
      doi={10.1142/S0129167X00000465},
}

\bib{nijhoff_backlund_1984}{article}{
      author={Nijhoff, F.~W.},
      author={Capel, H.~W.},
      author={Wiersma, G.~L.},
      author={Quispel, G. R.~W.},
       title={B\"acklund transformations and three-dimensional lattice
  equations},
        date={1984},
        ISSN={0375-9601},
     journal={Phys. Lett. A},
      volume={105},
      number={6},
       pages={267\ndash 272},
      review={\MR{763123}},
      doi={10.1016/0375-9601(84)90994-0},
}

\bib{nimmo_superposition_1997}{article}{
      author={Nimmo, J. J.~C.},
      author={Schief, W.~K.},
       title={Superposition principles associated with the {{Moutard}}
  transformation: an integrable discretization of a $(2+1)$-dimensional
  sine-{{Gordon}} system},
        date={1997},
        ISSN={0962-8444},
     journal={Proc. Roy. Soc. London Ser. A},
      volume={453},
      number={1957},
       pages={255\ndash 279},
      review={\MR{1436830}},
      doi={10.1098/rspa.1997.0015},
}

\bib{quispel_linear_1984}{article}{
      author={Quispel, G. R.~W.},
      author={Nijhoff, F.~W.},
      author={Capel, H.~W.},
      author={{van der Linden}, J.},
       title={Linear integral equations and nonlinear difference-difference
  equations},
        date={1984},
        ISSN={0378-4371},
     journal={Phys. A},
      volume={125},
      number={2-3},
       pages={344\ndash 380},
      review={\MR{761644}},
      doi={10.1016/0378-4371(84)90059-1},
}

\bib{sauer_parallelogrammgitter_1950}{article}{
      author={Sauer, Robert},
       title={Parallelogrammgitter als {{Modelle}} pseudosph\"arischer
  {{Fl\"achen}}},
        date={1950},
     journal={Math. Z.},
      volume={52},
       pages={611\ndash 622},
      review={\MR{0037042}},
        doi = {10.1007/BF02230715},
}

\bib{schief_isothermic_2001}{article}{
      author={Schief, W.~K.},
       title={Isothermic surfaces in spaces of arbitrary dimension:
  integrability, discretization, and {{B\"acklund}} transformations\textemdash
  a discrete {{Calapso}} equation},
        date={2001},
        ISSN={0022-2526},
     journal={Stud. Appl. Math.},
      volume={106},
      number={1},
       pages={85\ndash 137},
      review={\MR{1805487}},
      doi={10.1111/1467-9590.00162},
}

\bib{tabachnikov_bicycle_2017}{article}{
      author={Tabachnikov, Serge},
       title={On the bicycle transformation and the filament equation: results
  and conjectures},
        date={2017},
     journal={J. Geom. Phys.},
      volume={115},
       pages={116\ndash 123},
      review={\MR{3623617}},
        doi = {10.1016/j.geomphys.2016.05.013},
}

\bib{wadati_backlund_1974}{article}{
      author={Wadati, Miki},
       title={B\"acklund transformation for solutions of the modified
  {{Korteweg}}-de {{Vries}} equation},
        date={1974},
     journal={J. Phys. Soc. Japan},
      volume={36},
      number={5},
       pages={1498},
         doi = {10.1143/JPSJ.36.1498},
}

\bib{weierstrass_untersuchungen_1866}{article}{
      author={Weierstrass, Karl~T.},
       title={Untersuchungen \"uber die {{Fl\"achen}}, deren mittlere
  {{Kr\"ummung}} \"uberall gleich {{Null}} ist},
        date={1866},
     journal={Monatsber. Berliner Akad.},
       pages={612\ndash 625},
}

\bib{wunderlich_zur_1951}{article}{
      author={Wunderlich, Walter},
       title={Zur {{Differenzengeometrie}} der {{Fl\"achen}} konstanter
  negativer {{Kr\"ummung}}},
        date={1951},
     journal={\"Osterreich. Akad. Wiss. Math.-Nat. Kl. S.-B. IIa.},
      volume={160},
       pages={39\ndash 77},
      review={\MR{0056342}},
}

\end{biblist}
\end{bibdiv}

\end{document}